\documentclass[11pt]{amsart}
\usepackage{latexsym}
\usepackage{amsfonts, amsmath, amscd}
\usepackage{amssymb}
\usepackage[all]{xypic}
\usepackage{color}
\usepackage{enumerate}
\usepackage{verbatim}
\usepackage{amsthm}    
\usepackage{hyperref}
\hypersetup{colorlinks=false}
\usepackage{wrapfig}
\usepackage{graphicx, psfrag}
\usepackage{pstool}
\usepackage{pinlabel}   %%%%   for adding letters and labels to pdf figures.  SAVE	picture in Illustrator as pdf,  typeset in pdftex.  See http://ctan.unixbrain.com/macros/latex/contrib/pinlabel/pinlabdoc.pdf

     \usepackage{bm}
    
   \usepackage[normalem]{ulem}

 \pagestyle{plain} 
\oddsidemargin .0in
\evensidemargin .0in
\marginparsep 0pt
\topmargin -0.2in
\marginparwidth 0pt
\textwidth 6.2in
\textheight 8.6in
%%%%%%%%%%%%%%%  transition

\def\cal{\mathcal}

%%%%%%%%%%%%%%%%%%%% latex symbols

%\def\underset#1#2{{#2}_{#1}}
%\def\overset#1#2{{#2}^{#1}}
%%%%%%%%%%%%%%%%%%%

%%%%% new %%%%%%%%%

\def\de{\delta}

\def\text#1{{\em #1}}

\def\la{\lambda}

\def\ep{\varepsilon}

%%%%%%%%%%%%%
\def\be{\begin{equation}}
\def\ee{\end{equation}}
\def\bear{\begin{eqnarray}}
\def\eear{\end{eqnarray}}
\def\best{\begin{eqnarray*}}
\def\eest{\end{eqnarray*}}

%%%%%%%%%%%%%%%% Numbering theorems, lemmas, etc
%%%%%%%%%%%%%%%%%%%%%%%

\newtheorem{theorem}{Theorem}[section]
\newtheorem{prop}[theorem]{Proposition}

\newtheorem{lemma}[theorem]{Lemma}
\newtheorem{cor}[theorem]{Corollary}

\newtheorem{defn}[theorem]{Definition}

\newtheorem{remark}[theorem]{Remark}

\newtheorem{example}[theorem]{Example}

%\newtheorem{assumption}[theorem]{Assumption}
%\newenvironment{assume}{\begin{assumption}\rm}{\end{assumption}}
%\newlist{examples}{enumerate}{1}

%\def\rem{ \addtocounter{theorem}{1}
%{\non \bf Remark \arabic{section}.\arabic{theorem} }}
%\def\ex{ \addtocounter{theorem}{1}
%{\non \bf Example \arabic{section}.\arabic{theorem} }}
%\newenvironment{proof}{{\bf Proof:} }{\\}
%\newtheorem{ext}[theorem]{Extension Lemma}

%%%%%%%%%%%%%%%%%%%%%%%%

%%%%%%%%%symbols%%%%%%%%%%
\def\ra{\rightarrow}

\def\r#1{\right#1}
\def\l#1{\left#1}
%%%%%%

\def\phi{\varphi}
\def\ti{\times}

\def\del{\overline \partial}

%%%%%%%%% blackboard %%%%%%%%%

\def\F{{\Bbb F}}

%\def\JD{{\cal J}^D}
%\def\JDE{{\cal J}^{D,E}}
%def\JVE{{\cal J}^{V,E}}

\def\M{{\cal M}}

%%%%%%%%% calligraphic%%%%%%%%%%%%

%%%%%%% ev %%%%%%%

\def\ev{\mathrm{ev}}

\def\wt#1{\widetilde{#1}}
\def\wh#1{\widehat{#1}}
\def\ov#1{\overline{#1}}

%%%%%%%%%%%%  New %%%%%%%%%%%%%%%%%%%%%
\def\w{\omega}

%%%%%%%%%%%%%%%%  Special to this paper  %%%%%%%%%%%%%%%%%%%%%%%%

\def\La{\Lambda}

\def\E{\mathcal E}
\def\F{\mathcal F}

%%%%%%%%%  Tom added  %%%

   %  for cech homology
   %  for Steenrod homology

\def\D{\mathbf D}

 \usepackage{hyperref}
 \hypersetup{colorlinks=false}

\def\Ax{{\cal A}\hskip-1pt{\it pprox}}

    \def\!{|\hspace{-1.5pt}|\hspace{-1.5pt}|}
    
\def\cit#1{\cite[(#1)]{ip}}

%\date{\empty}
\title{\bf Corrigendum: The symplectic sum formula for Gromov-Witten invariants\vskip.2in}
\author{ Eleny-Nicoleta Ionel}
\address{Stanford University,
 Palo Alto, CA, USA.}
\email{ionel@math.stanford.edu}

\author{Thomas H. Parker}
\address{Michigan State University, East Lansing MI, USA. }
\email{parker@math.msu.edu}

\addtocounter{section}{0}

%%%%%%%%%%%%%%%%%%%%%%%%%%%%%%%%%%%%%%%
\begin{document}

\begin{abstract}
We correct an error and an oversight  in  \cite{ip}.  The sign of the curvature in (8.7)  is wrong, requiring a new proof of Proposition~8.1.     Several lemmas address only the basic case of maps  with intersection multiplicity $s={\mathbf 1}$; the general case follows by applying  the pointwise   estimates in \cite{ip} with a modified Sobolev norm.
\end{abstract}
   
\maketitle

%\begin{abstract}
%We correct two errors in  \cite{ip}.   \sout{Parts of the analysis {\cred (tone this down)}} Several lemmas address only the basic case of maps  with intersection multiplicity $s={\mathbf 1}$. %Formula (7.5) for the adjoint, and the sign of the curvature in (8.7)  are wrong, requiring a new proof of Proposition~8.1.   
%\end{abstract}

%\begin{abstract}
%We correct an error and an oversight  in  \cite{ip}.   Several lemmas address only the basic case of maps  with intersection multiplicity $s={\mathbf 1}$. Formula (7.5) for the adjoint, and the %sign of the curvature in (8.7)  are wrong, requiring a new proof of Proposition~8.1.   
%\end{abstract}

\vspace{5mm}

\section{{\bf \large Sobolev Norms }}
\setcounter{theorem}{0}

In \cite{ip}, portions of Sections~6-8 are valid only for  maps  with intersection multiplicity $s={\mathbf 1}$.  To cover  maps with multiplicity vector $s=(s_1, \dots, s_\ell)$, we 
modify the  Sobolev norms in \cit{6.9} by setting 
\begin{align}
\label{new6.9}
\|\zeta\|^p_{m,p,s} & \ =\   \int_{C_\mu}    \Big(  |\nabla^m\zeta|^p + |\zeta|^p\Big)\   \rho^{-\de p/2} \nonumber \\
& \hspace{2cm} + \sum_k \int_{C_\mu\cap B_k(1) } \   \Big( |\rho^{1-s_k}\nabla^m(\zeta^N)|^p + |\rho^{1-s_k}\zeta^N|^p \Big)\   \rho^{-\de p/2}. 
\end{align}
With this revision, the norms $\!(\xi, h)\!_m$ and $\!\eta\!_m$ are again  defined by formulas \cit{6.10} and \cit{6.11}.  For $s={\bf 1}$, the above norm is uniformly equivalent to  the norm of \cit{6.9}.  For general $s$, it now has a stronger  weighting  factor of $\rho^{1-s_k}$ on the normal components near  each node with multiplicity $s_k\ge 2$.  Accordingly, one must verify that  Lemmas~6.9 and 7.1, Proposition~7.3 and Lemma~9.2 continue to hold for this new norm.  This is easily done using the pointwise estimates  already appearing  in the proofs, as follows.

\vspace{6mm}

\noindent{\bf Modifications to Section~6.}\ \   Lemma~6.8 is unaffected by the change in norms. The statement of Lemma~6.9 remains valid for the new norms with a slight modification to the exponent in its conclusion:
\best
\label{newlemma6.9}
\!\overline{\partial}_{J_F} F -\nu_F\!_0 \ \leq\  c \,|\la|^{\frac1{5|s|}}
\eest
with $c$ uniform on each compact set ${\mathcal K}_{\delta_0}$. (Throughout \cite{ip}, the $\delta$ indexing the sets ${\mathcal K}_{\delta}$ of \cit{3.11}
is unrelated to the exponent $\delta$ in \eqref{new6.9}.) The proof of Lemma~6.9 is modified as follows.

\begin{proof}[Proof of Lemma~6.9]
Set $\Phi_F= \del_{J_F}-\nu_F$ and follow the proof in \cite{ip} until (6.14).  Outside $\rho \le \rho_0$, the new norm is uniformly equivalent to old one, so \cit{6.13} continues to hold for $\Phi=\Phi_F$. Again we focus on the half $A_+$ of one $A_k$ where $|w_k|\le |z_k|$ where, after omitting subscripts, $F$ is given by \cit{6.14}:
\bear
\label{new6.14}
F=\left((1-\beta)h^v, \; az^s (1+(1-\beta)h^x), \; bw^s(1+ (1-\beta)h^x)^{-1}\right)
\eear
where $\beta, h^v$ and $h^x$ are functions of the coordinate $z$ on $C_1\subset C_0$ and $w=\mu/z$.

Introduce the map $\wt{f}:C_0\to X$  given by \eqref{new6.14} with the last entry replaced by zero.   Noting that $\Phi_f=0$ (because $f$ is $(J, \nu)$-holomorphic), we can write
\bear\label{new.6.sumofPhis}
\Phi_F\ =\ (\Phi_F-\Phi_{\tilde{f}}) + (\Phi_{\tilde{f}}-\Phi_{f}).
\eear
To complete the proof, we will bound the two expressions on the righthand side using the following facts, which hold for small $\la$:
\begin{enumerate}
 \setlength\itemsep{2mm}
\item[(i)]  \cite[Lemma~6.8d]{ip}  implies that $|h^v|+ |dh^v| +|h^x|+|dh^x|  \le c\rho \le \frac12$ on $A_+$. 
\item[(ii)] $|dz|=|z|$ and $|dw|=|w|$ in the cylindrical metric \cit{4.5}, and $0\le \beta \le 1$ and $|d\beta|\le 2$ by \cit{5.11}.
\item[(iii)]  On $A_+$,  $\rho^2=|z|^2+|w|^2\le 2|z|^2$, and hence  $|z|^{-2}\le2\rho^{-2}$ and $\sqrt{|\mu|}\le|z|\le  \rho \leq  \rho_0$.
\item[(iv)] By \cite[Lemma~6.8c]{ip},  $a_k$ and $b_k$ are bounded above and below by positive constants, and hence $|\mu_k^{s_k}|\sim |\la|$ by  \cit{6.3}.\\
\end{enumerate}
Writing \eqref{new6.14} as $(F^v, F^x, F^y)$, Facts~(i)-(iv)  imply the pointwise bounds
\bear
\label{FdFbounds}
|F^v|+|dF^v|\le c\rho, \qquad 
|F^x| + |dF^x|\le c\rho^s, \qquad 
|F^y|+|dF^y|\le  c|w|^s\le C|\la|\rho^{-s}
\eear
for  $s=s_k$,  with constants $c, C$ uniform on ${\mathcal K}_{\delta_0}$.   It follows  that  $|dF|\le 3c\rho$ and, since $J$ and $\nu$ are smooth,  
\bear\label{est.F-f}
|J_F-J_{\wt f}| + |\nu_F-\nu_{\wt f}|\ \le\  c|F-\wt f|\ =\ c|F^y|\  \le\  c|\la|\rho^{-s}.
\eear
(Here, and below, we are updating the constant $c$ as we proceed.)

Now, using the definition of $\Phi_F$, we have
\bear
\label{PhiFFormula1}
2(\Phi_F-\Phi_{\tilde{f}})\  = \ d(F-\tilde{f})   +(J_F-J_{\tilde{f}})dF j  +J_{\tilde{f}}(dF-d\tilde{f})j-2 (\nu_F-\nu_{\wt f}), 
\eear
with $F-\wt f=F^y$.  Estimating each term, one sees that the above bounds  imply that
\bear
\label{err.F-wtf}
|\Phi_F-\Phi_{\wt f}|\le c |\la|\rho^{-s} \quad\mbox{ so} \quad \rho^{1-s}|(\Phi_F-\Phi_{\wt f})^N| \ \le\  c|\la| \rho^{1-2s}
\eear
on $A_+$.  Applying the second integral in \cit{5.10}, noting  that $s \ge 1$ and that $\rho^2\ge|\mu|$ on $A_+$  yields
\bear
\label{6.9firsthalf}
\! \Phi_F-\Phi_{\wt f}\!_{0,A_+}\ \le\  c |\la| |\mu|^{\frac1 2 (1-2s-\de/2)}\ \le\  c|\la|^{\frac1{3s}}
\eear
where the last inequality uses (iv) above and the fact that $0<\delta<\frac16$. By symmetry, a similar estimate holds on the other half of $A_k$.  Hence \eqref{6.9firsthalf} holds on the entire set  $A\subset C_0$ where $\rho\le\rho_0$ with a revised constant $c$ and the exponent replaced by $\frac1{3|s|}$ (since $|s|\ge s_k$ for all $k$).

It remains to  estimate the last term in \eqref{new.6.sumofPhis}.  On $A_+$,  the
difference between $f$ and $\wt f$, namely
\best
f-\wt f =\beta (h^v, az^s (1+ h^x), 0),
\eest
 is supported in the region $\rho\le 2|\mu|^{1/4}$. Again expanding as in \eqref{PhiFFormula1} and using (i)-(iv), one obtains  
\best
|\Phi_{\wt f} -\Phi_{f}|\ \le\ |f-\wt f |+|df-d\wt f|+ |J_f-J_{\wt f}| + |\nu_f-\nu_{\wt f}|\ \le\  c\rho 
\eest
Similarly, using (i)-(iv),  the normal component $(\wt f- f)^N = -\beta az^s (1+ h^x)$ satisfies: 
\best
|(\wt f- f)^N|+ |d(\wt f- f)^N|\ \le\   c \rho^s.  
\eest
Next observe that the images of  $f$ and $\wt f$ both lie in $Z_0=X\cup Y$  and, as in \cit{6.6},  $J$ preserves the  normal subbundle $N_0$ to $V$ in $Z$ along $Z_0$. Also noting that \eqref{FdFbounds} implies  that $|(d\wt f)^N| \le c\rho^s$, one sees that
$$
\left|((J_{f}-J_{\wt f})\circ d \wt f)^N +(J_f ( d\wt f - d f))^N \right|\ \le \ 
\left|(J_{f}-J_{\wt f})\circ (d \wt f)^N\right| +  \left|J_f( d\wt f - d f)^N \right|\ \le\  c \rho^s.
$$
Because $\nu^N$ vanishes along $V$, there is also a bound $|\nu^N_f |\le C|f^N|\le c\rho^s$; the same is true for $\wt f$, and therefore $ |\nu^N_{\wt f}- \nu_f^N|\le c\rho^s$. Expanding $\Phi_{\wt f}-\Phi_f$ as in \eqref{PhiFFormula1} and using above estimates yields
\bear
|\Phi_{\wt f}-\Phi_{f}|+\rho^{1-s}|(\Phi_{\wt f}-\Phi_{f})^N| \ \le\  c\rho
\eear 
with the lefthand side supported in the region $\rho\le 2|\mu|^{1/4}$ in $A_+$ and, by symmetry, in $A_k$ for each $k$.    Applying the first integral  in \cit{5.10} bounds the integrals in the norm \eqref{new6.9} on the union $A$ of the $A_k$.  Again using the facts that  $|\la|\sim |\mu_k|^{s_k}$,  $0<\delta<\frac16$ and $s_k\le |s|$, one obtains the bound
\bear
\label{6.9secondhalf}
\!\Phi_{\wt f}-\Phi_f\!_{0,A}\ \le\  c |\mu|^{\frac14 (1-\de/2)}\ \le\  c|\la|^{\frac1{5|s|}}.
\eear
The lemma now follows from  \cit{6.13} and the bounds \eqref{6.9firsthalf} and \eqref{6.9secondhalf} on the norms of the two terms in  \eqref{new.6.sumofPhis}.
\end{proof}

\vspace{6mm}

\noindent{\bf Modifications to Section~7.}\ \   Delete the paragraph that starts after  \cit{7.4} and ends with \cit{7.6}; we no longer need $D_F^*$.

\smallskip

$\bullet$  The conclusion of  Lemma~7.2 should read 
\bear
\label{newLemma7.2eq}
|(\nabla\zeta^V)^N|\le c\rho^s|\zeta^V|, \hspace{1cm}  |L_F^N\zeta^V |\le c\rho^s |\zeta^V|
\eear
 {\em as the proof shows} (keep all powers of $\rho$ in the last line of the two paragraphs of the proof).\\

$\bullet$  The statement  of Proposition~7.3 remains the same after deleting the statement about $D^*_F$.  The proof is unchanged until two lines before \cit{7.10}, at which point we have established the estimate
\bear\label{displ.d}
 |\D_{F, C}(\zeta ,\ov\xi, h)|\ \le\  c|\nabla \zeta| +c\rho( |\zeta|+|\ov \xi|+ |h|) 
\eear
(this is  also \cit{9.7}).  As a special case, we have
\best
|\D_{F, C}(\zeta^N ,0, 0)^N|\ \le \ c \big(|\nabla \zeta^N| +\rho |\zeta^N|\big).
\eest
On the other hand, the normal component is
\best\label{bd.D.N}
\left(\D_{F, C}(\zeta^V ,\ov\xi, h)\right)^N\ = \  \left(L_F(\zeta^V+ \sum \beta_k \ov \xi_k)\right)^N + (J_FdF h)^N.
\eest
Using \eqref{newLemma7.2eq}, the first term on the right is  bounded by $c\rho^s(|\zeta^V|+|\ov \xi|$).  The second is dominated by $|(J_F(dF^V+dF^N))^N|\,|h|$ with $|dF^N|\le c\rho^{s}$ by \eqref{FdFbounds}.  Furthermore, because  $V$ is $J$-holomorphic,  $(Jv)^N=0$  along $V$ for all  vectors $v$ in the $V$ direction.  Hence  $|(J_FdF^V)^N|\le c |F^N| |dF^V|\le c\rho^s$, again using \eqref{FdFbounds}. Altogether, this gives  the following pointwise  bound:
\bear
\label{est.DN}
\left|\D_{F, C}(\zeta ,\ov\xi, h)^N \right| \ \le \ c \left(|\nabla \zeta^N| \, +\, \rho|\zeta^N|\right) + c\rho^s\left(|\nabla \zeta^V|+ |\zeta^V|+|\ov \xi|+ |h|\right).  
\eear
Multiplying both sides of this inequality by $\rho^{1-s-\de/2}$, raising  to the power $p$ and integrating shows that \cit{7.10}   holds in  the new norms.  
The proof is completed as before. 

\vspace{6mm}

\noindent{\bf Modifications to Section~8.}\ \  See Section~2 below.

\vspace{6mm}

\noindent{\bf Modifications to Section~9.}\ \   Section~9  uses Proposition~8.1, but not its proof: the existence of a right inverse is needed, but nothing about its construction. Switching to the new norms \eqref{new6.9} does not affect  Proposition~9.1, and requires only small modifications to the proofs of  Lemma~9.2 and Proposition~9.3.  \\

$\bullet$ {\bf Lemma 9.2}:   The statement of Lemma~9.2 remains the same.  For the proof, again let  $B_k$ be the region around the $k$th node where $\rho\leq |\mu|^{1/4}$ and let $A_k$ be the larger region where $\rho\le \rho_0$.  The new norms \eqref{new6.9}  differ from the old norms only in the weighting of the normal components near the nodes.  Thus to show that Lemma~9.2 holds in the new norms we need only bound the $L^p$ integral of the weighted normal components
$$
|\rho^{1-s_k-\de/2}\D_F(\xi, h)^N |,
$$
 first on $A_k\setminus B_k$, then on $B_k$.  Fix $k$ and write $\mu=\mu_k$ and $s=s_k$.

 Follow the existing proof until two lines below \cit{9.5}, at which point we have  identified   sections of $F^*TZ_\la$ over  $C_\mu\setminus B$ with sections of $f^*TZ_0$  over $C_0\setminus B$, and  established the estimate:
\bear
\label{9.2second}
|D_F(\xi, h)-D_f(\xi, h)|\ \le\  |(L_F-L_f)(\xi)|+ (|J_F-J_f|+|dF-df|)|h|
\eear
 with $D_f(\xi, h)=0$.  Formula \cit{1.11}  shows that $L_f$ is a first order differential operator of the form
$$
L_f(\xi) \ =\ A_f(\nabla\xi) + B_f(\xi) 
$$
whose coefficients $A_f$ and $B_f$ are continuous functions of $f$ and $df$.  Hence
\bear\label{9.2.third}
|L_F\xi-L_f\xi|\ \le\ c\, \left(|F-f| + |dF-df|\right)\cdot  \left(|\nabla\xi| + |\xi|\right)
\eear
for some constant $c$. But in the region $A_k\setminus B_k$ we have $F-f=F-\wt f = F^y$.  Then
 \eqref{FdFbounds}  shows  that
the $C^1$ distance between $F_t$ and $f_t$ is dominated by $|\la|\rho^{-s}$, so  \eqref{9.2second} implies the bound 
$$
|D_F(\xi, h)| \ \le\  c |\la|\rho^{-s} \left(|\nabla\xi| + |\xi|+|h|\right)
$$
on $A_k\setminus B_k$.   After expanding $\xi$ as in \cit{6.8} and noting that $|\nabla (\beta_k \xi_k)|\le c |\ov\xi|$ by the estimate preceding \cit{7.9}, the above bound simplifies to 
\bear
 \label{9.new9.6}
 |\D_F (\zeta,\ov\xi, h)|\ \le\  \ c|\la|\rho^{-s }\left(|\nabla\zeta| +|\zeta|+|\ov \xi|+ |h|\right),
 \eear
a mild strengthening of the displayed equation above \cit{9.6}. But in the region  $A_k\setminus B_k$,  we have   $|\mu|^{1/4}\le\rho\le\rho_0$ and $|\la| \sim |\mu|^s$ so   \eqref{9.new9.6} implies 
\bear
\label{1.15B}
|\D_F (\zeta,\ov\xi, h)|+\rho^{1-s}|\D_F (\zeta,\ov\xi, h)^N|\ \le c|\la|^{1/4}\rho^{1+s}\left(|\nabla\zeta| +|\zeta|+|\ov \xi|+ |h|\right).
\eear
Taking the norms defined by \cit{6.10} and  \eqref{new6.9}, shows that \cit{9.6} continues to hold in the new norms.

Now focus on one $B_k$. Proceed as in \cite{ip}, using  the new estimate \eqref{est.DN} to strengthen \cit{9.7}.   For $\xi=\xi^V$,    \eqref{est.DN}  gives
\bear
\label{9.1}
|\D_F (\zeta^V, \ov \xi, h)^N| \ \le \  c\rho^s\l(|\nabla\zeta^V|+ |\zeta^V|+|\ov \xi|+ |h|\r)
\eear
on each $B_k$. For $\xi=\xi^N$, we again have $\ov\xi=0$ and $\xi=\zeta$, so \eqref{est.DN}   and  the last displayed equation on page 988 give
\bear
\label{9.2}
|\D_F (\xi^N, 0)^N|\ \le\   c\left(|\nabla \zeta^N|+\rho|\zeta^N|\right) \  \le \   c\rho^s (|\dot a|+|\dot b|).
\eear
 Combining \eqref{9.1} and \eqref{9.2}  with the argument on top of page 989 shows that the conclusion of the first displayed equation on top of page 989 continues to hold in the new norms. The proof is completed as before. \\
 
$\bullet$ {\bf Proposition~9.3}:  Replace the last 4 lines on page 989 of \cite{ip} by the following:  write $F_n-f_n=(\zeta_n, \bar{\xi}_n)$ in the notation of (6.7) and (6.8).  Then $\bar{\xi}_n\to 0$ because $f_n\to f_0$ in $C^0$.  By Lemma~5.4, the norm $\!f_n\!_1$   on $A_k(\rho_0)$ is bounded by $c\rho_0^{1/6}$.  Inserting the bounds \eqref{FdFbounds}  into \eqref{new6.9} and integrating using \cit{5.10} gives the similar inequality $\!F_n\!_1\le c\rho_0^{1/6}$  on $A_k(\rho_0)$.  Therefore $\!\zeta_n\!_1 \le \!F_n\!_1+\!f_n\!_1 +|\bar{\xi}_n| \le 3c\rho_0^{1/6}$ on 
 $A_k(\rho_0)$ for all large $n$.  Combining $\dots$  {\em Continue at the top of page~990, and change  $2|s| \mapsto 5|s|$ on  page 990, line 14.}

\vspace{5mm}

 \section{{\bf \large Revised Section 8}}

An incorrect formula \cit{7.5} for the adjoint and a   sign error in the curvature formula \cit{8.7} invalidate the proof of Proposition~8.1.    The following  replacement for Section~8 
 retains everything up to and including the statement of Proposition~8.1,  and then gives a new proof of Proposition~8.1. 
  Instead of establishing   eigenvalue estimates, this new proof   transfers the partial right inverse $P$ from the nodal curve $C_0$ to its smoothing $C_\mu$. 
  %{\cred This is the approach to gluing used by Donaldson-Kronheimer for Yang-Mills instantons, and later used by McDuff-Salamon to prove Ruan-Tian gluing of holomorphic maps. } 
  The proof is then easier, the adjoint $D_F^*$ never appears, and   again the required estimates follow from  pointwise bounds  already  in   \cite{ip}.

\bigskip

Retain the beginning of Section~8 of \cite{ip} up to Proposition~8.1. \\

 To simplify notation, note that for $F\in \Ax_s^{\de_0}(Z_\la)$, \cite[Proposition 7.3]{ip} shows that the linearizations $\D_F$ of \cit{7.4} are uniformly  bounded operators
 $$
 \D_F:\E_F\to \F_F
 $$
between the spaces
\best
\E_F=L_{1;s, 0}(F^*TZ_\la)\oplus T_q V^\ell \oplus T_{C_\mu}\ov\M_{g,n}
\quad\mbox{and} \quad 
  \F_F= L_s(\La^{01}( F^*TZ_\la)),
 \eest
while the linearization $\D_f=\D_{f, C_0}$ of \cit{7.3} at each $f\in\M_s^V(X)\ti_{ev}\M_s^V(Y)$ is a map between the corresponding spaces $\E_f$ and $\F_f$.

\medskip

The aim of this section is to prove the following analytic result.

\begin{prop}
\label{prop8.1}
    For each generic $(J,\nu)\in {\cal J}(Z)$, there are positive constants $\la_0$ and 
$E$  such that, for all  non-zero $\la\le \la_0$, the linearization
$\D_F$ at an approximate map $(F,C_\mu)=F_{f,C_0, \mu}\in
\Ax_s^{\de_0}(Z_\la)^*$  has a right inverse
\bear\label{P.F.defn} 
P_F :\E_F\to \F_F  
\eear
that satisfies $\D_FP_F=id$ and
\bear
\label{Pmu.bounded}
  E^{-1}\,\!\eta\!_0 \;\le \;\!P_F \eta\!_{1} \ \le\  E\,\!\eta\!_0.
\eear
\end{prop}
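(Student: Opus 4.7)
The plan, following the hint in the section preamble, is to transfer a bounded right inverse $P:\F_f\to\E_f$ of $\D_f$ on the nodal curve $C_0$ to an approximate right inverse $Q_F$ of $\D_F$ on the smoothing $C_\mu$, and then correct via a Neumann series to produce $P_F$. Such a $P$ exists with $\!P\!_{0\to 1}\le C_0$ uniformly on ${\cal K}_{\de_0}$, by the transversality setup for generic $(J,\nu)$ that underlies the moduli theory in \cite{ip}. This approach bypasses both the incorrect curvature formula \cit{8.7} and any use of the adjoint $\D_F^*$.

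For each node $k$ I fix smooth cutoffs $\beta^\flat,\beta^\sharp$ on $C_\mu$ supported where $\rho\ge|\mu_k|^{1/4}$, with $\beta^\flat\equiv 1$ where $\rho\ge 2|\mu_k|^{1/4}$ and $\beta^\sharp\equiv 1$ on $\mathrm{supp}\,\beta^\flat$; their derivatives are uniformly bounded in the cylindrical metric on the neck. On $\mathrm{supp}\,\beta^\sharp$ the smoothing $C_\mu$ is canonically identified with $C_0$ with small discs removed at the nodes, and this identification matches $F^*TZ_\la$ with $f^*TZ_0$. For $\eta\in\F_F$ define $\widehat\eta\in\F_f$ by extending $\beta^\flat\eta$ by zero across the nodes, and set $Q_F\eta:=\beta^\sharp\cdot(P\widehat\eta)$, interpreted via the identification and extended by zero on the thin neck. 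Since $\D_f(P\widehat\eta)=\widehat\eta$, a direct computation gives
\[
\D_FQ_F\eta-\eta\ =\ (\beta^\sharp\beta^\flat-1)\eta\ +\ \beta^\sharp(\D_F-\D_f)(P\widehat\eta)\ +\ [\D_F,\beta^\sharp](P\widehat\eta),
\]
and each of the three terms is supported in the neck $\rho\le 2|\mu_k|^{1/4}$.

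Each error term is controlled by the pointwise bounds already established in this corrigendum, integrated against the norm \eqref{new6.9}. The first is absorbed by the weight $\rho^{-\de p/2}$ via the first integral in \cit{5.10}, contributing $O(|\mu|^{(1-\de/2)/4})$. For the second, the argument of Lemma~9.2 together with the refined normal estimate \eqref{est.DN} yields
\[
|(\D_F-\D_f)(P\widehat\eta)|+\rho^{1-s}|((\D_F-\D_f)(P\widehat\eta))^N|\ \le\ c|\la|\rho^{-s}\bigl(|\nabla(P\widehat\eta)|+|P\widehat\eta|\bigr)
\]
on $\{|\mu_k|^{1/4}\le\rho\le\rho_0\}$; the integration that produced \eqref{6.9firsthalf}--\eqref{6.9secondhalf} then gives a global factor of $|\la|^{1/(5|s|)}$ after summing over nodes. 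The commutator $[\D_F,\beta^\sharp]$ is zeroth order, supported in the slightly wider neck, and is estimated the same way. Altogether,
\[
\!\D_FQ_F\eta-\eta\!_0\ \le\ c\,|\la|^{1/(5|s|)}\,\!\eta\!_0.
\]

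For $|\la|\le\la_0$ small, $K_F:=\D_FQ_F-\mathrm{id}$ has $\!K_F\!_{0\to 0}\le\tfrac12$, so $\mathrm{id}+K_F$ is invertible on $\F_F$ by Neumann series and $P_F:=Q_F(\mathrm{id}+K_F)^{-1}$ satisfies $\D_FP_F=\mathrm{id}$. The upper bound $\!P_F\eta\!_1\le E\,\!\eta\!_0$ follows from $\!Q_F\!_{0\to 1}\le C$, inherited from $\!P\!_{0\to 1}\le C_0$ together with the uniform boundedness of multiplication by $\beta^\sharp$ in \eqref{new6.9}; the lower inequality is automatic from Proposition~7.3, which gives $\!\D_F\!_{1\to 0}\le E'$ and hence $\!\eta\!_0=\!\D_FP_F\eta\!_0\le E'\,\!P_F\eta\!_1$. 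The main technical hurdle is the weight $\rho^{1-s_k}$ on normal components in \eqref{new6.9}: tangential and normal parts of every error term must be tracked separately, using \eqref{est.DN} rather than the crude bound \eqref{displ.d} on the normal components of both $(\D_F-\D_f)(P\widehat\eta)$ and the commutator; once this bookkeeping is in place the integrals close via \cit{5.10} exactly as in the proofs of Lemmas~6.9 and~9.2 above.
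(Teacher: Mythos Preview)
Your overall strategy---transfer a right inverse $P_f$ from $C_0$ to $C_\mu$ via cutoffs and correct by Neumann series---is exactly the one the paper uses, but the specific splicing you propose has a genuine gap.

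The problem is the first error term $(\beta^\sharp\beta^\flat-1)\eta$. This equals $-\eta$ on the entire neck $\{\rho\le|\mu_k|^{1/4}\}$, and $\eta\in\F_F$ is \emph{arbitrary} in the norm $\!\cdot\!_0$. If $\eta$ happens to be supported in that neck, then $\!(\beta^\sharp\beta^\flat-1)\eta\!_0=\!\eta\!_0$, so this term is not small at all. Your appeal to \cit{5.10} would only apply to a specific integrand with known pointwise decay $O(\rho^a)$; it says nothing about an arbitrary $\eta$. The commutator term $[\D_F,\beta^\sharp](P\widehat\eta)$ has the same defect: since $|d\beta^\sharp|$ is bounded on a cylindrical annulus of \emph{fixed} length, one only obtains $\![\D_F,\beta^\sharp](P\widehat\eta)\!_0\le c\,\!P\widehat\eta\!_1\le cC_0\!\eta\!_0$, with no small factor.

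The paper avoids both issues. First, its transfer map $\pi_F:\F_F\to\F_f$ does \emph{not} multiply $\eta$ by a cutoff: it simply restricts $\eta$ to each half of $C_\mu$ and identifies with the corresponding component of $C_0$, so no part of $\eta$ is thrown away and the identity $\gamma_F\pi_F=\mathrm{id}$ holds exactly, eliminating your term~(a) altogether. Second, the splicing maps $\Gamma_F,\gamma_F$ use a logarithmic cutoff $\phi_M$ on the annulus $M^{-1}\le|w/z|\le M$ of cylindrical length $\sim\log M$, with $|d\phi_M|\le|\log M|^{-1}$. The Leibnitz error is then bounded by $|\log M|^{-1}\!(\xi,h)\!_1$ and can be made $<\varepsilon$ by choosing $M$ large, after which $\la_0$ is chosen small depending on $M$. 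This two-parameter mechanism is the essential ingredient your construction lacks.
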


\bigskip

Proposition~8.1 is proved by constructing an approximation to $P_F$ in the following sense.

\begin{defn}
\label{8.defRRI}
An  {\em approximate  right inverse to ${\mathbf D}_F$} is a  linear map 
$$
A_F: \F_F\to \E_F
$$ 
such that, for all $\eta\in\F_F$,
\bear
\label{8.defroughinverse}
\! D_F A_F \eta - \eta\!_0\  \le\  \tfrac12 \! \eta\!_0 \qquad \mbox{and}\qquad   \!A_F\eta\!_1 \ \le\   C \! \eta\!_0. 
\eear
\end{defn}

\medskip

Such an approximate  right inverse defines an actual right inverse by the formula 
\best
P_F= A_F\sum_{k\ge 0}  (I - D_F A_F)^k.
 \eest 
The bounds \eqref{8.defroughinverse} ensure that this series converges and defines a bounded operator $P_F$, which satisfies $D_FP_F=I$.  Because both $P_F$ and $D_F$ are bounded (cf. \cit{Lemma~7.3}), we have $\!P_F \eta\!_{1} \ \le\  c\,\!\eta\!_0$ and 
$\!\eta\!_0 = \!D_FP_F \eta\!_{0}  \le  c\,\!P_F\eta\!_1$, which gives \eqref{Pmu.bounded}.

Thus Proposition~8.1 follows from the existence of an 
approximate right inverse $A_F$ as in Definition~\ref{8.defRRI},  where the constant $C$ in \eqref{8.defroughinverse} is uniform in $\la$ for small $\la$.  %{(\cred as well as in $f$ in a compact subset $\cal K$ using \cite[Observation 6.7]{ip})}.
The  remainder of this section is devoted to constructing such an $A_F$.

\bigskip

We start by observing that, under the hypotheses of Proposition~\ref{prop8.1}, we may assume that $f$ is regular (cf. \cit{Lemma~3.4}).  Thus  $\mathbf D_f:\E_f\to \F_f$ is a bounded surjective map, so has a bounded right inverse $P_f: \F_f \ra \E_f$.  We will use a splicing construction to transfer $P_f$ from an operator on $C_0$ to one on the domain $C_\mu$ of $F$, and show that the resulting operator $A_F$ satisfies \eqref{8.defroughinverse}.
The construction is summarized by the following  (noncommutative) diagram:
 \bear
 \label{diagram8.1}
\xymatrixcolsep{4pc}
\xymatrix{ 
\F_F \ar[d]^{\pi_F} \ar@{.>}[r]^{A_F}& \E_F\ar[r]^{\D_F}&\F_F  \ar[d]_{\pi_F} 
\\
\F_f \ar[r]^{P_f}\ar@ /^1pc/[u]^{\gamma_F} & \E_f\ar[u]_{\Gamma_F} \ar[r]^{\D_f}&\F_f
\ar@ /_1pc/[u]_{\gamma_F}
} 
\eear
Each of the maps $\gamma_F$, $\pi_F$ and $\Gamma_F$ will be defined by regarding the two halves of  $C_\mu$ as a graphs over $C_0$, and similarly regarding $Z_\la$ as graphs over $Z_0$.  The desired approximate right inverse is then defined by 
$$
A_F= \Gamma_F\circ P_f\circ \pi_F.  
$$

\medskip

Our notation for splicing is as in Lemma~9.2 of \cite{ip}. For each $\mu\not= 0$, let 
$$
C_1(\mu)\ =\  C_1\cap \left\{|z|\ge |\mu|^{\frac34}\right\} \qquad   C_2(\mu)\ =\  C_2\cap \left\{|w|\ge |\mu|^{\frac34}\right\}.
$$
and let $C^+_\mu$ and $C^-_\mu$ be the corresponding parts of $C_\mu$  (see the figure). We identify $C_1(\mu)$ with   $C_\mu^+$   by  the projection $(z,w)\mapsto z$. With this identification, $z$ is a coordinate on both $C_1(\mu)$ and $C^+_\mu$ and, similarly,   $w$ is a coordinate on both $C_2(\mu)$  and $C^-_\mu$.

 \begin{center}
 \includegraphics[scale=1]{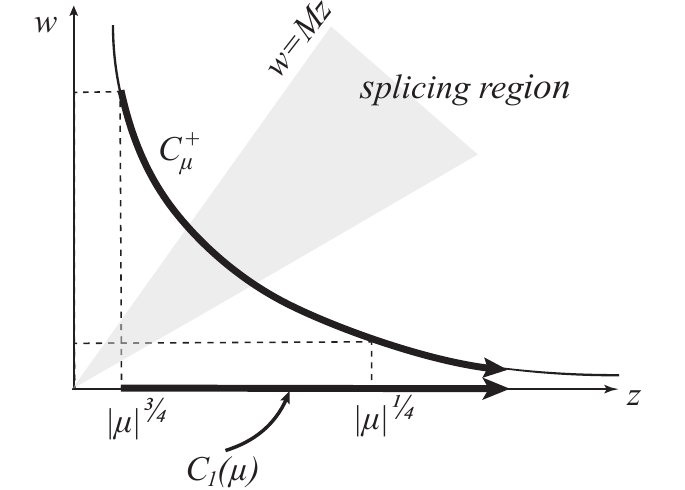}
 \end{center}
 
There is a corresponding picture in the target:  the projections $(v,x,y)\mapsto (v,x)$ and  $(v,x,y)\mapsto (v,y)$ give identifications $Z_\la = X$ in the region $Z_\la^+$ where $|x|\ge |\la|^{3/4}$, and $Z_\la = Y$ in the region $Z_\la^-$  where $|y|\ge |\la|^{3/4}$. This  trivializes $f^* TZ_0$ and $F^* TZ_\la$ inside the coordinate chart $(v,x,y)$.
  
These identifications, together  with \cite[Definition 6.2]{ip}, induce   isomorphisms
\bear\label{D.8.hat.map}
\Omega^{0,q}(C_1(\mu), f^* TX) \ra \Omega^{0,q}(C_\mu^+, F^* TZ_\la) \qquad \mbox{by $\xi_1\mapsto \wh\xi_1$}
\eear
for $q=0, 1$ defined by  $\wh\xi_1(z) =\xi(z)$ in $B_k$  under the above identifications of $C_1(\mu)$ with $C_\mu^+$ and of $X$ with $Z_\la^+$, extended by setting
$\wh\xi_1 =\xi$ outside the union of the balls $B_k$ of radius $2|\mu_k|^{1/4}$ (where  $C_0$ is identified with $C_\mu$ and $F=f$).  Permuting $z\leftrightarrow w$ and $x\leftrightarrow y$ gives  similar isomorphisms
   $$
\Omega^{0,q}(C_2(\mu), f^* TX) \ra \Omega^{0,q}(C_\mu^-, F^* TZ_\la) \qquad \mbox{by $\xi_1\mapsto \wh\xi_2$.}
$$

\bigskip

\begin{lemma}\label{lemma8.3}

For each region $\Omega_M^+$ defined by $M^{-1}|w|\le|z| \le 1$, there are constants $c_M, \la_M>0$ such that
the map \eqref{D.8.hat.map}  satisfies the  pointwise estimates
\bear
\label{lemma8.3a}
c_M^{-1} |\xi|\ \le\  |\wh \xi|  \ \le\   c_M |\xi|, \qquad |\nabla \wh \xi|   \ \le\   c_M  \big(|\nabla \xi|+|\xi|\big)
\eear
whenever $|\la|\le \la_M$ small. Furthermore, if  $\xi=\xi^V$ then $(\wh \xi\; )^N=0$. 
\end{lemma}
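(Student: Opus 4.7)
The approach is to unpack the map $\xi\mapsto\wh\xi$ explicitly in the chart $(v,x,y)$, where everything reduces to one pointwise bound on $F$. A section of $f^*TX$ has the form $\xi=\xi^v\partial_v+\xi^x\partial_x$ (no $\partial_y$, since $f$ maps into $X=\{y=0\}$), while a section of $F^*TZ_\la$ is of the form $\zeta^v\partial_v+\zeta^x\partial_x+\zeta^y\partial_y$ subject to the tangency condition $F^y\zeta^x+F^x\zeta^y=0$. The domain identification $(z,w)\mapsto z$ is a genuine isometry for the cylindrical metric (both sides equal $|dz|^2/|z|^2$ in the $z$-chart, since $dw/w=-dz/z$ along $zw=\mu$). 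The target identification, together with the tangency condition, then forces
$$
\wh\xi\ =\ \xi^v\,\partial_v \, + \, \xi^x\,\partial_x \, - \, (F^y/F^x)\,\xi^x\,\partial_y,
$$
so all three claims reduce to pointwise estimates on $F^y/F^x$ and its derivative.

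The single geometric input is a bound on this ratio. On $\Omega_M^+$ the inequalities $|w|\le M|z|$ and $zw=\mu$ force $|z|^2\ge|\mu|/M$. Combined with \cit{6.14} (which gives $|F^x|\ge c|z|^s$ using Fact~(i) of the previous proof) and Fact~(iv) (giving $|\la|\sim|\mu|^s$), this yields
$$
|F^y/F^x|\ \le\ c\,|\mu|^s/|z|^{2s}\ \le\ c\,M^s \qquad \mbox{on } \Omega_M^+.
$$
The first estimate in \eqref{lemma8.3a} then follows from $|\wh\xi|^2=|\xi|^2+|F^y/F^x|^2|\xi^x|^2$ (the lower bound $|\wh\xi|\ge|\xi|$ is trivial).

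For the gradient estimate, I would differentiate $\wh\xi^{\,y}=-(F^y/F^x)\xi^x$ by the product rule: the term with $\nabla\xi^x$ is dominated by $cM^s|\nabla\xi|$, while the term with $\nabla(F^y/F^x)$ is controlled by the parallel cylindrical-metric estimate $|\nabla(F^y/F^x)|\le c|\la|/|F^x|^2\le cM^s$ (the factor $|z|$ from $|dz|=|z|$ exactly cancels the $|z|^{-1}$ from differentiating $\mu^s/z^{2s}$). Bounded Christoffel symbols from the smooth ambient metric in the chart contribute a further term of size $c_M|\xi|$, giving $|\nabla\wh\xi|\le c_M(|\nabla\xi|+|\xi|)$. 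Finally, if $\xi=\xi^V$ then $\xi^x=0$, so $\wh\xi^{\,x}=0$ and $\wh\xi^{\,y}=-(F^y/F^x)\cdot 0=0$, leaving $\wh\xi=\xi^v\partial_v$ with zero component along the normal directions $\partial_x,\partial_y$. The main obstacle is not analytic but notational: correctly interpreting the two identifications via the chart trivialization (which bypasses any analytic subtlety) and keeping track of the cylindrical-metric normalization when differentiating $F^y/F^x$.
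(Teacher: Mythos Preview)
Your argument is correct and is essentially the same computation as the paper's, carried out in a different register. You write $\wh\xi$ explicitly in the ambient frame $\partial_v,\partial_x,\partial_y$ and bound the extra $\partial_y$-component via $|F^y/F^x|\le cM^s$; the paper instead records this same fact as the uniform equivalence of the two metrics $g_X=(g_V+dx\,d\bar x)+O(R)$ and $g_\la=\big[g_V+(1+|\la|^2/|x|^4)\,dx\,d\bar x\big]+O(R)$ on the image region where $|y|\le c_1(M)|x|$, and then handles the derivative by bounding the difference of Levi-Civita connections by $c(1+|\la|^2|x|^{-5})$ against $|df^N|+|dF^N|\le c\rho^s$. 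Your product-rule differentiation of $F^y/F^x=\la/(F^x)^2$ reproduces exactly this bound. The one place where your write-up is slightly looser than the paper's is the phrase ``bounded Christoffel symbols \dots\ contribute a further term of size $c_M|\xi|$'': the paper keeps explicit track of the connection difference and shows the potentially dangerous factor $|\la|^2|x|^{-5}\le |y/x|^2\rho^{-s}$ is killed by $|dF^N|\le c\rho^s$, which is worth making explicit. On the other hand, you actually verify the final clause ($\xi=\xi^V\Rightarrow(\wh\xi)^N=0$), which the paper's proof does not spell out.
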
 

\medskip

\begin{proof}
For each non-zero small $\mu$, equations \cite[(4.4), (4.5)]{ip} show that the cylindrical metric
on $C_\mu\cap \Omega_M^+$ is independent of $\mu$ (the ratio $g_\mu/g_0$ of the metrics is $r^2\rho^{-2}\left(1+|\frac{\mu}{z}|^2\right) = 1$). 
  On the target, the corresponding formula shows that the (smooth)   metrics $g_X$  on $X$ and $g_\la$ on  $Z_\la$ have the  form
\bear
\label{8.metricformula}
g_X\ =\ (g_V+  dx d\bar{x})+ O(R) \hspace{1cm}
g_\la\ =\ \left[ g_V + \left(1+\tfrac{|\la|^2}{|x|^4}\right)\, dx d\bar{x}\right]+ O(R),
\eear
where $g_V$ is the metric on $V$,  and  $R^2=|x|^2+|y|^2$.   Using the formula for the Christoffel symbols, one sees that the difference of the corresponding Levi-Civita connections is a 1-form $\alpha\, dx$  on $X$ with $|\alpha|\le c(1+ |\la|^2 |x|^{-5})$.

As in \eqref{new6.14}--\eqref{FdFbounds},    the coordinates of each approximate map $F$ satisfy $|x| \sim |z|^s$ and $|y| \sim |w|^s$ and $xy=\la$. Thus the image  of $\Omega_M^+$  lies in the  region $Z_\la$ where $|y|\le c_1(M) |x|$ for a constant $c_1(M)$ independent of $\la$.  In this region, the metrics \eqref{8.metricformula} are uniformly equivalent with a similar constant $c_2(M)$, giving the first part of \eqref{lemma8.3a}.  Furthermore, the covariant derivatives are related by 
$$
\nabla\wh\xi = \nabla\xi +  \alpha_F\xi \qquad \mbox{with } |\alpha_F\xi|\ \le\  c_3(M)\,(1+|\la|^ 2|x|^{-5})\cdot  \left(|df^N|+|dF^N|\right)\, |\xi|.
$$
Noting that  $xy=\la$ and $|x| \sim |z|^s\sim \rho^s$, the term $|\la|^ 2|x|^{-5}$ is dominated by $\left|\frac{y}{x}\right|^2\rho^{-s}\le c^2_1(M)\rho^{-s}$.  We also have 
 $|dF^N|\le c\rho^s$  by \eqref{FdFbounds} and the  bound  $|df^N|\le c\rho^s$ obtained similarly by taking $\beta=0$ in \eqref{new6.14}.  The last part of  \eqref{lemma8.3a} follows.
\end{proof}

\bigskip

To define cutoff functions, consider the central annular region $\Omega_M$ of $C_\mu$ defined by 
\bear
\label{8.Mregion}
M^{-1} \le\ \l|\frac{w}{z} \r|\ \le \ M.
\eear
In cylindrical coordinates  (defined by $z=\sqrt{|\mu|}e^{t+i\theta}$), this is a region of length $\log (1+M^2)$.  Thus we can choose a smooth  cutoff function $\phi_M(z,w)$ that vanishes for $|w|>M|z|$,  is equal to 1 for $M|w|\le |z|$, and satisfies $0\le\phi_M\le 1$ and 
\bear
\label{8.logphi}
|d\phi_M|\le \frac{1}{|\log M|}.
\eear
To maintain symmetry,  we  can also assume  (after appropriately symmetrizing) that
\best
\phi_M(z, w) +\phi_M(w, z)= 1.
\eest 
\smallskip

With this setup,  the maps $\gamma_F$, $\pi_F$ and $\Gamma_F$ in Diagram~\eqref{diagram8.1} are defined as follows.

\medskip

\noindent{\bf  $\bullet$ The map $\pi_F:\F_F \ra \F_f$.} \   The map \eqref{D.8.hat.map} with $q=1$ has an inverse
$$
\tau^+: \Omega^{0,1}(C^+_\mu, F^* TZ_\la) \ra \Omega^{0,1}(C_1(\mu), f^* TX).
$$
Then each $F^* TZ_\la$-valued (0,1)-form  $\eta$ on $C_\mu$  restricts to a form $\eta^+$ on $C^+_\mu$, and we define $\pi_F(\eta)$ on $C_1$ by 
\best
(\pi_F(\eta))(z)=\begin{cases} \tau^+\eta^+(z) &\mbox{ for }|z| > |\mu|^{1/2}
\\0 &\mbox{ for } |z|\le |\mu|^{1/2}.
\end{cases}
\eest
The restriction of $\pi_F(\eta)$ to $C_2$ is defined symmetrically.

\bigskip

\noindent{\bf  $\bullet$ The map $\gamma_F:\F_f \ra \F_F$.}  This map  takes a $f^* TZ_0$-valued (0,1)-form $\eta$ on $C_0$ to a $F^* TZ_\la$-valued (0,1)-form $\eta$ on $C_\mu$.  It is given by
 \bear
 \label{8.defgamma_F}
\gamma_F(\eta) \ =\ \phi_M \,\wh\eta_1 \ +\  (1-\phi_M) \,\wh\eta_2
 \eear
 where  $\wh \eta_1$ and $\wh \eta_2$ are defined in terms of the restrictions  $\eta|_{C_1} = \xi_1\, d\bar{z}$ and  $\eta|_{C_2} = \xi_2\, d\bar{w}$ by 
 $\wh \eta_1=\wh\xi_1\, d\bar{z}$  and $\wh \eta_2= \wh\xi_2\, d\bar{w}$ inside each coordinate chart $(z,w)$, and $\gamma_F=id$ outside. 

\bigskip

\noindent{\bf $\bullet$  The map $\Gamma_F:\E_f \ra \E_F$.}  This map  takes a section of $f^* TZ_0$  on $C_0$ to a section of  $F^* TZ_\la$ on $C_\mu$, and a variation $h$ in the complex structure of $C_0$ to a variation in  the complex structure of $C_\mu$.  It is given by
 \bear
 \label{8.defGamma_F}
\Gamma_F(\xi, h_0) \ =\ \big(\phi_M \,\wh\xi_1 +  (1-\phi_M) \,\wh\xi_2,\    h_\mu     \big)
 \eear
 where  $\wh \xi_1$ and $\wh \xi_2$ are obtained from  the restrictions  $\xi|_{C_1} = \xi_1$ and  $\xi|_{C_2} = \xi_2$ inside these neighborhoods, and $h_\mu=(h_0, 0)$ in the notation of \cit{4.9}.   Again,  $\Gamma_F$ extends outside as $\Gamma_F=id.$  Thus,  in the notation of  \cite[(7.3), (7.4)]{ip}, $\Gamma_F$ is a map
\best
\Gamma_F:  L_{1;s}(f^*TZ_0) \oplus T_{C_1}\wt\M\oplus T_{C_2}\wt \M\ra 
 L_{1;s}(F^*TZ_\la) \oplus T_{C_\mu}\ov\M_{g, n}.
\eest

\bigskip

\begin{cor} The maps $\pi_F$,  $\gamma_F$ and  $\Gamma_F$ satisfy  
$$
\pi_F \gamma_F =id, 
$$ and for small $\la$
\bear\label{8.d.pi}
 \! \pi_F\eta\!_0 \le 2 \! \eta\!_0 \qquad  \! \gamma_F\eta\!_0 \le c_M \! \eta\!_0 \qquad \! \Gamma_F (\xi, h)\!_1 \le c_M\! (\xi,  h)\!_1,  
 \eear
where $c_M$ depends only on the constant $M$ in \eqref{8.Mregion}.
 \end{cor}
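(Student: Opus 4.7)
The plan is to verify each of the four assertions by reducing to the pointwise estimates of Lemma~\ref{lemma8.3} and the cutoff bound \eqref{8.logphi}.  For the identity $\pi_F\gamma_F = id$, outside the node neighborhoods both maps reduce to the identity, so only the node region requires checking.  On the subset $|z|>|\mu|^{1/2}$ of $C_\mu^+$ where $\tau^+$ is applied, the relation $zw=\mu$ gives $|w|<|z|$, and with a compatible choice of threshold (the natural case being $M=1$, or after rescaling $|\mu|^{1/2}$ by $\sqrt M$) the cutoff $\phi_M\equiv 1$ on this region.  Then $\gamma_F(\eta)|_{C_\mu^+}=\widehat\eta_1$, and applying $\tau^+$ returns $\eta_1$.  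The extension by zero for $|z|\le|\mu|^{1/2}$ matches the symmetrized property $\phi_M(z,w)+\phi_M(w,z)=1$, and the $C_2$-side is handled by the analogous argument.

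For the bound $\!\pi_F\eta\!_0\le 2\!\eta\!_0$ the key input is the metric formula \eqref{8.metricformula}: in the region $|z|>|\mu|^{1/2}$ one has $|x|\sim|z|^s\ge|\mu|^{s/2}\sim|\la|^{1/2}$, so $|\la|^2/|x|^4\le 1$ and the ratio $g_\la/g_X$ is bounded by $2$.  Since $\{|z|>|\mu|^{1/2}\}$ and $\{|w|>|\mu|^{1/2}\}$ partition $C_\mu$ up to a measure-zero set (from $zw=\mu$), summing the $L^p$-integrals over $C_1$ and $C_2$ reproduces the integral over $C_\mu$ with total distortion bounded by $\sqrt 2<2$.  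For the $c_M$-bounds on $\gamma_F$ and $\Gamma_F$, $\phi_M$ is supported in $\Omega_M^+$ and $1-\phi_M$ in its mirror, so Lemma~\ref{lemma8.3} applies directly with constant $c_M$.  The pointwise bound $|\gamma_F\eta|\le c_M(|\eta_1|+|\eta_2|)$ integrates to \eqref{8.d.pi}; for $\Gamma_F$, differentiating the splicing produces an extra term $|d\phi_M|(|\widehat\xi_1|+|\widehat\xi_2|)\le (c_M/|\log M|)(|\xi_1|+|\xi_2|)$ controlled by \eqref{8.logphi}, while the principal derivative bound $|\nabla\widehat\xi|\le c_M(|\nabla\xi|+|\xi|)$ is again Lemma~\ref{lemma8.3}.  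For the weighted normal component in \eqref{new6.9}, the final assertion of Lemma~\ref{lemma8.3} gives $(\widehat{\xi^V})^N=0$, so only $\xi^N$ contributes to $(\Gamma_F\xi)^N$ and the weight $\rho^{1-s_k}$ transfers cleanly through the identification $(z,w)\mapsto z$; the complex-structure piece is immediate from $h_\mu=(h_0,0)$.

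I expect the main obstacle to be the universal constant $2$ in the $\pi_F$-bound, as opposed to a generic $M$-dependent constant.  This forces a sharp application of \eqref{8.metricformula}, with the precise inequality $1+|\la|^2/|x|^4\le 2$ on the region where $\tau^+$ is applied, rather than merely some bounded ratio.  The other three assertions are more forgiving, using only the qualitative pointwise estimates of Lemma~\ref{lemma8.3} combined with \eqref{8.logphi}, and once the supports of $\phi_M$ and $1-\phi_M$ are carefully tracked the remaining work is a routine integration of pointwise bounds.
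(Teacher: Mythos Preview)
Your approach is essentially the same as the paper's: both verify $\pi_F\gamma_F=id$ from the definitions, bound $\pi_F$ via the target metric comparison (the paper phrases this as ``the projection $C_\mu\to C_1$ is an isometry and $g_\la$ is greater than $g_X$,'' while you track the explicit ratio $1+|\la|^2/|x|^4\le 2$), and bound $\gamma_F$ and $\Gamma_F$ by combining $0\le\phi_M\le 1$, the gradient bound \eqref{8.logphi}, and the pointwise estimates of Lemma~\ref{lemma8.3}. You supply somewhat more detail than the paper --- notably the use of $(\widehat{\xi^V})^N=0$ to handle the weighted normal term in \eqref{new6.9}, and the remark that the threshold in $\pi_F$ must be coordinated with $M$ for the identity to hold literally --- but the underlying argument is the same.
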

 
 \begin{proof}
 The equation  $\pi_F \gamma_F =id$ follows directly from the definitions of $\pi_F$ and $\gamma_F$.   As in first paragraph of the proof of Lemma~\ref{lemma8.3},  the projection $C_\mu\to C_1$ is an isometry in  the region where $\rho\le 1$, and  $g_\la$ is greater than $g_X$ on its image. It  follows that   the operator norm of $\pi_F$ is at most 2 for small $\la$.

Similarly,  \eqref{8.defgamma_F}, the fact that $0\le\phi_M\le 1$,  and Lemma~\ref{lemma8.3}  show that 
$$
\!\gamma_F(\eta)\!_0 \ \le \ \!\wh\eta_1\!_0 \ +\!\wh\eta_2\!_0\ \le\ c_M\!\eta\!.
$$
Using  \eqref{8.defGamma_F} in exactly the same way, we also have 
$$
\!\Gamma_F(\xi, h)\!_0  \ \le \ \!\wh\xi_1\!_0 \ +\!\wh\xi_2\!_0 +\| h\| \ \le\ c_M \! (\xi, h)\!_0.
$$
Differentiating \eqref{8.defGamma_F} and again applying 
Lemma~\ref{lemma8.3},  yields the last inequality in \eqref{8.d.pi}.
 \end{proof}

 \bigskip

 The next lemma shows that the  difference $\D_F \Gamma_F-\gamma_F \D_f$ can be made small.  The statement again  involves the constant $M$  in the bounds  \eqref{8.Mregion}  and  \eqref{8.logphi} associated with the cutoff functions $\phi_M$.

\begin{lemma}\label{L.approx.com} 
 Fix the compact  subset ${\cal K}_{\de_0}$ of $\M_s^V(X) \ti_\ev \M_s^V(Y)$  of  $\delta_0$-flat maps.   For any $\ep>0$, there exits  a slope $M=M_\ep>1$ and a $\la_M>0$  such that each approximate map $F$ constructed from  $f\in {\cal K}_{\de_0}$ with $|\la |\le \la_M$ satisfies   
\best
\! (\D_F \Gamma_{F, M}- \gamma_F \D_f)(\xi, h)\!_0\ \le\   \ep \! (\xi, h)\!_1
\eest
for all $(\xi, h)\in \E_f$. 
 \end{lemma}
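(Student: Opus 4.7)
The plan is to cover $C_\mu$ by three regions and estimate the pointwise difference $(\D_F\Gamma_F - \gamma_F\D_f)(\xi, h)$ on each, then integrate against the norm \eqref{new6.9}. First, outside the neck region $\{\rho\ge\rho_0\}$: choosing $\la_M$ small enough that the overlap $\Omega_M$ of \eqref{8.Mregion} lies inside $\{\rho<\rho_0\}$, the cutoff $\phi_M$ is constant, the identifications $C_\mu\cong C_0$ and $Z_\la\cong Z_0$ are trivial, $F=f$, and both $\D_F\Gamma_F$ and $\gamma_F\D_f$ reduce to $\D_f(\xi,h)$. So only the two halves of the neck contribute.

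On $C_\mu^+\setminus\Omega_M$ (and symmetrically on the other side) we have $\phi_M\equiv 1$, so $\Gamma_F(\xi,h) = (\wh\xi_1,h_\mu)$ and $\gamma_F\D_f(\xi,h) = \wh{(\D_f\xi)}_1$, and no derivative of $\phi_M$ appears. Decompose $\D_F\wh\xi_1 - \wh{\D_f\xi_1}$ through the intermediate linearization at $\wt f$ of \eqref{new6.14}, as in the proof of Lemma~6.9: the piece $\D_F - \D_{\wt f}$ is dominated by $|F-\wt f|+|dF-d\wt f|\le c|\la|\rho^{-s}$ coming from \eqref{FdFbounds}, and $\D_{\wt f} - \wh{\D_f}$ is controlled by the connection $1$-form $\alpha_F$ from the proof of Lemma~\ref{lemma8.3}. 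In this region $|z|\ge\sqrt{M}\,|\mu|^{1/2}$, and $|\la|\sim|\mu_k|^{s_k}$, so $|\la|\rho^{-s}\le cM^{-|s|/2}$ and the pointwise error is at most $cM^{-|s|/2}(|\nabla\xi|+|\xi|+|h|)$, with the analogous bound on the weighted normal component via \eqref{est.DN}. Integrating in \eqref{new6.9} yields a contribution of order $M^{-|s|/2}\!(\xi,h)\!_1$ to $\!(\D_F\Gamma_F - \gamma_F\D_f)(\xi,h)\!_0$.

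On the annulus $\Omega_M$ both $\wh\xi_1$ and $\wh\xi_2$ are defined. Expanding $\D_F\Gamma_F(\xi,h)$ by the Leibniz rule produces the linearization difference just analyzed (bounded by a constant times $c_M$) plus the extra Leibniz term $(\del\phi_M)\otimes(\wh\xi_1 - \wh\xi_2)$. Combining $|d\phi_M|\le|\log M|^{-1}$ from \eqref{8.logphi} with a pointwise matching bound of the shape $|\wh\xi_1 - \wh\xi_2| \le c|\mu|^{1/2}(|\nabla\xi|+|\xi|)+c|\ov\xi|$ on $\Omega_M$, which reflects the $L_{1;s,0}$ matching condition $\xi_1=\xi_2$ at the node together with the corresponding matching of the leading $\rho^{s_k-1}$-coefficient of $\xi^N$, and integrating over $\Omega_M$ whose cylindrical area is $O(\log M)$, yields a contribution of order $|\log M|^{-(p-1)/p}\,\!(\xi,h)\!_1$. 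Choosing $M$ large enough that both $M^{-|s|/2}$ and $|\log M|^{-(p-1)/p}$ are less than $\ep$, and then $\la_M$ small enough for the preceding estimates to apply, completes the proof.

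The main obstacle I expect is the weighted normal-component estimate in $\Omega_M$: the norm \eqref{new6.9} carries a $\rho^{1-s_k}$ weight on $(\wh\xi_1-\wh\xi_2)^N$, which diverges at the node whenever $s_k\ge 2$, so establishing the matching bound above requires unpacking the definition of $L_{1;s,0}$ to verify that the $N$-components of $\xi_1$ and $\xi_2$ not only agree at the node but in fact match to order $\rho^{s_k-1}$. This is the $\xi$-level analogue of the $|dF^N|\le c\rho^{s_k}$ estimate in \eqref{FdFbounds} that drives the proofs throughout Sections~6--9; once it is in hand, the splicing argument parallels the one used in the modifications to Section~9 above.
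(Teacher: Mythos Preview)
Your decomposition into three regions is right, and the Leibniz term in $\Omega_M$ is where the cutoff derivative lands. But the mechanism you propose for smallness on $C_\mu^+\setminus\Omega_M$ is incorrect, and the ``main obstacle'' you anticipate is a difficulty you have created rather than one that is actually present.

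On the region $C_\mu^+\setminus\Omega_M$ inside the ball $B_k=\{\rho\le 2|\mu|^{1/4}\}$, the estimate $|F-f|_{C^1}\le c|\la|\rho^{-s}$ is \emph{false}: that bound only holds on $A_k\setminus B_k$, where $\beta=0$ and hence $f=\wt f$. Inside $B_k$ one has $f\ne\wt f$, and the $C^1$ distance between $F$ and $f$ is only $O(\rho)$, as in the proof of Lemma~6.9. Your attribution of the $\wt f$-to-$f$ discrepancy to the connection form $\alpha_F$ of Lemma~\ref{lemma8.3} is a confusion: $\alpha_F$ is uniformly $O(1)$ and merely lets one remove hats via $|\nabla\wh\xi|\le c(|\nabla\xi|+|\xi|)$; it contributes no smallness. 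Consequently your $M^{-|s|/2}$ bound on this region does not hold. The paper instead uses the $O(\rho)$ pointwise bound on all of $A_M^+\subset B_k$ and integrates; since $\rho\le 2|\mu|^{1/4}$ there, this yields a factor $c_M|\la|^{1/(5|s|)}$, so the smallness comes from $\la$, not from $M$.

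For the Leibniz term, no matching bound on $\wh\xi_1-\wh\xi_2$ is needed. One simply uses $|\wh\xi_1-\wh\xi_2|\le c_M|\xi|$ (Lemma~\ref{lemma8.3}) together with $|d\phi_M|\le|\log M|^{-1}$; since $\ov\xi\in TV$ has $\ov\xi^N=0$, the weighted normal part is $\rho^{1-s}|\xi^N|=\rho^{1-s}|\zeta^N|$, which is already controlled by $\!(\xi,h)\!_1$. This produces the clean $c|\log M|^{-1}\!(\xi,h)\!_1$ contribution and avoids entirely the delicate $L_{1;s,0}$-matching argument you propose. The paper then chooses $M$ to control the Leibniz term and, with $M$ fixed, chooses $\la_M$ to control the linearization-difference term; your scheme of making both small in $M$ alone cannot work because the second term genuinely depends on $\la$.
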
 

\begin{proof} We use the set-up of  Lemma 9.2 above, except that we do not make the assumption that  $\D_f(\xi_0, h_0)=0$ in \eqref{9.2second}.  Outside the region  $B=\bigcup B_k$,  $\Gamma_{F, M}$ and $\gamma_F$ are both the identity for 
$|\mu|\le M^{-2}$.  Thus the discussion from \eqref{9.2second} to \eqref{1.15B} implies  the bound 
\best
\!(\D_F \Gamma_F- \gamma_F \D_f)(\xi, h)\!_{0,C_0 \setminus B}  \ \le\  c |\la|^{\frac1{4}} \ \! (\xi, h)\|_1.
\eest
Next restrict attention to the  region $B_k$ around one node of $C_0$, where $\rho\le 2 |\mu|^{1/4}$,   
and consider a deformation $(\xi, h)$ on  $C_1$ (an identical analysis applies on $C_2$).  Then  $\xi\in \Gamma(C_1, f^*TX)$  lifts by \eqref{D.8.hat.map}  to  $\wh \xi\in \Gamma(C^+_\mu, F^* TZ_\la)$:  this is the identification implicitly used in \cit{9.5} and in \eqref{9.2second}.  With this notation, \eqref{9.2second} can be written as
\best
| \D_F(\wh \xi, h)-\wh{\D_f(\xi, h)}|\ \le\  |L_F\wh \xi - \wh {L_f \xi}|+c |J_F- J_f||\xi| + |dF- df| |h|.
\eest
For the following estimates, we restrict attention to  the  annular subregion  $A^+_M\subset B_k$ where 
 \bear
\label{8.5.A}
|w/z|\le M \qquad \mbox{ and  thus } \quad |z|\le \rho \le |z|\sqrt {1+M^2}.
\eear
In this subregion,  the   $C^1$ norm of $F-f$  is   $O(\rho)$,    as shown in the proof of  Lemma~6.9.  Using \eqref{9.2second} and \eqref{9.2.third}  we obtain 
$$
| \D_F(\wh \xi, h)-\wh{\D_f(\xi, h)}|\ \le\  c\rho \big(|\nabla \wh \xi|+|\wh \xi| +|\nabla  \xi|+| \xi| +|h|\big).
$$
Lemma \ref{lemma8.3} then shows that we may remove the hats on the righthand side, giving
 \bear\label{8.core}
 | \D_F(\wh \xi, h)-\wh{\D_f(\xi, h)}|\ \le\  c\rho \big(|\nabla  \xi|+| \xi| +|h|\big). 
\eear
Here the left-hand side is regarded as a function of $z$ and $w=\mu/z$ on $C_\mu\cap A^+_M$, while $\xi$ and $h$ are functions of $z$ on the corresponding region of $C_1$,  and $\rho^2=|z|^2+|w^2|$.   

 As in previous lemmas, we need separate bounds for the normal components.  First,   \eqref{8.core} for $\xi=\xi^N$ and $h=0$ gives
\bear
\label{8.222}
\left| \D_F(\wh \xi^N, 0)^N-\wh{\D_f(\xi^N,0)}\right|\ \le\  c\rho \big(|\nabla  \xi^N|+| \xi^N|\big).
\eear
On the other hand, if $\xi=\xi^V$ is tangent to $V$, Lemma \ref{lemma8.3} shows that its lift $\wh \xi$ is also tangent to $V$.  Writing $D_F(\xi,h)=L_F\xi+J_FdFh$, we can apply \eqref{newLemma7.2eq} and the argument made before \eqref{est.DN} to obtain
$$
|\D_F (\wh {\xi^V}, h)^N| \ \le  \ c \rho^s\l(|\nabla \wh{\xi^V}|+ |\wh {\xi^V}|+ |h|\r).
$$
By the same argument, a similar inequality holds with $D_F$ replaced by $D_f$ and $\wh {\xi^V}$ by $\xi^V$; together these give
$$
|\D_F (\wh {\xi^V}, h)^N-(\wh{\D_f (\xi^V, h)})^N| \ \le  \ c \rho^s\l(|\nabla \xi^V|+ |\xi^V|+ |h|\r)
$$
after again using \eqref{lemma8.3a} to remove hats on the right. Combining this with \eqref{8.222} gives 
\bear\label{8.core2}
|\D_F (\wh {\xi}, h)^N-(\wh{\D_f (\xi, h)})^N| \ \le  \  c \rho (|\nabla \xi^N|+|\xi^N|)+ c \rho^s\l(|\nabla \xi^V|+ |\xi^V|+ |h|\r).
\eear

Now set   $\Psi(\xi,h) = D_F (\wh {\xi}, h)-\wh{\D_f (\xi, h)}$.  With this notation,  we can combine  \eqref{8.core} and  \eqref{8.core2}, and then decompose $\xi$ into $(\zeta, \ov\xi)$ as in \cit{6.9}, noting that $|\nabla \xi|+|\xi|\le |\nabla \zeta|+|\zeta|+|\ov\xi|$ as before \eqref{9.new9.6}.   The result is 
\begin{eqnarray*}
|\Psi(\xi,h)| +\rho^{1-s}|(\Psi(\xi,h))^N| &  \le & c \rho \Big(|\nabla \xi|+|\xi| +|h| + \rho^{1-s} (|\nabla \xi^N|+ |\xi^N|)\Big) \\
& \le & c \rho \Big(|\nabla \zeta|+|\zeta| + |\bar{\xi}| +|h| + \rho^{1-s} (|\nabla \xi^N|+ |\xi^N|)\Big).
\end{eqnarray*}
Multiplying by $\rho^{-\de/2}$ and computing the integral \eqref{new6.9} over $A^+_M \cap C_\mu$, where  $\rho\sim |z|$ by \eqref{8.5.A} and $|\la|\sim |\mu|^s$, one sees that 
\begin{eqnarray}\label{8.la/5sbound}
\!\D_F (\wh \xi_1, h_1)- \wh{ \D_f(\xi_1, h_1)}\!_{0,A^+_M\cap C_\mu}   &  \le  & c_M |\mu|^{1/4(1-\de/2)} \| \xi_1, h_1\|_1 \nonumber \\
&  \le  & c_M |\la|^{\frac{1}{5|s|}} \| \xi_1, h_1\|_1
\end{eqnarray}
for all pairs $(\xi_1, h_1)$ on   $C_1$.   A similar estimate holds for pairs $(\xi_2, h_2)$ on  $C_2$. 

To complete the proof, recall that both $\Gamma_F$ and $\gamma_F$ are obtained by splicing  in the region $\Omega_M$ of \eqref{8.Mregion}.  Using  \eqref{8.defGamma_F}, the formula  $\D_F(\xi, h)=L_F\xi+JdFh$ and the Leibnitz rule, we obtain 
\best
\left| \D_F\Gamma_F(\xi,h)- \phi_M \D_F(\wh \xi_1, h_\mu) -(1-\phi_M) \D_F (\wh \xi_2, h_\mu)\right|  \ \le \ 
 |d\phi_M| \cdot |\xi| \ \le\  |\log M|^{-1} |\xi|.
\eest
Combining this  with \eqref{8.defgamma_F} and \eqref{8.la/5sbound} gives the following uniform estimate: 
$$
  \! (\D_F \Gamma_{F, M}- \gamma_F \D_f)(\xi, h)\!_0\ \le  \ 
  \left(c_M\la^{\frac 1 {5|s|}}+ c|\log M|^{-1}\right) \! (\xi, h)\!_1.
$$
The lemma follows by first choosing $M$ so that $c|\log M|^{-1}\le \ep/2$, then choosing  $\la_M$ so that $c_M\la_M^{\frac{1}{5|s|}}\le \ep/2$.
\end{proof}

\bigskip

We are now able to define the approximate inverse of $\D_F$. Recall that the linearization $\D_f$ of a regular map is onto.
Fix the compact set ${\cal K}={\cal K}_{\delta_0}$   of $\de_0$-flat regular maps.  Then we can choose a family $P_f$ of partial right inverses of $D_f$ which are uniformly bounded 
\best
\| P_f \eta\|_0 \ \le\  K \| \eta\|_1.
\eest 
on  ${\cal K}$. Recall that the operator norm of $\pi_F$ is at most 2.

\begin{lemma} 
\label{AFexistsLemma} In the above context,  there exist positive  constants $C, M$ and $\la_0$ such that for any approximate map $F$ (obtained from $f\in \cal K$ for 
$|\la| \le \la_0$), the operators \best
A_F= \Gamma_{F, M}\circ  P_f\circ  \pi_F: \E_F\to \F_F
\eest
obtained from  splicing in region \eqref{8.Mregion} satisfy
\bear
\label{8.defaprinverse}
\! D_F A_F \eta - \eta\!_0\  \le\  \tfrac12 \! \eta\!_0 \qquad\mbox{and} \qquad  \! A_F\eta\!_1 \ \le\   C \! \eta\!_0
\eear
for all $\eta\in\F_F$. 
%For $f\in {\cal K}_\de^*$ ?? 
%Consider $P_0: \F_0 \ra \E_0$ be a right inverse for $\D_f$ {\cred or use Greens'}. Then for any sufficiently small $\la>0$,  % Then in a sufficiently small neighborhood of $f\in {\cal K}$ and for sufficiently small $\la$, 
\end{lemma}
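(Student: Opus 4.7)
The plan is a direct composition estimate: combine the operator bounds from the Corollary above, the defining identities $\D_f P_f=\mathrm{id}$ and $\pi_F\gamma_F=\mathrm{id}$, and the near-commutativity of the diagram \eqref{diagram8.1} supplied by Lemma~\ref{L.approx.com}.

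For the operator bound $\!A_F\eta\!_1\le C\,\!\eta\!_0$, I would chain the three inequalities in \eqref{8.d.pi} with the uniform right-inverse bound $\!P_f\tilde\eta\!_1\le K\,\!\tilde\eta\!_0$. Since $A_F=\Gamma_{F,M}P_f\pi_F$,
\[
\!A_F\eta\!_1 \;\le\; c_M\,\!P_f\pi_F\eta\!_1 \;\le\; c_M K\,\!\pi_F\eta\!_0 \;\le\; 2 c_M K\,\!\eta\!_0,
\]
which yields the second inequality in \eqref{8.defaprinverse} with $C=2c_M K$ once $M$ is fixed.

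For the approximate-inverse estimate, I would add and subtract $\gamma_F\D_f(P_f\pi_F\eta)$ and use $\D_f P_f=\mathrm{id}$ to split
\[
\D_F A_F \eta - \eta \;=\; (\D_F\Gamma_F - \gamma_F\D_f)(P_f\pi_F\eta) \;+\; (\gamma_F\pi_F-\mathrm{id})\eta.
\]
The second term vanishes: $\pi_F$ is injective on $L^p$ forms (its kernel consists of elements vanishing off a measure-zero subset of $C_\mu$ where both $|z|\le|\mu|^{1/2}$ and $|w|\le|\mu|^{1/2}$), so combined with $\pi_F\gamma_F=\mathrm{id}$ from the Corollary, the identity $\pi_F(\gamma_F\pi_F-\mathrm{id})=0$ forces $\gamma_F\pi_F=\mathrm{id}$. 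The first term is controlled by Lemma~\ref{L.approx.com}: for any $\varepsilon>0$ there exist $M_\varepsilon$ and $\la_{M_\varepsilon}>0$ such that $\D_F\Gamma_F-\gamma_F\D_f$ has operator norm at most $\varepsilon$ from $\E_f$ to $\F_F$; applied with $(\xi,h)=P_f\pi_F\eta$ and combined with the previous bounds, this term is at most $2\varepsilon K\,\!\eta\!_0$.

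To close, choose $\varepsilon=1/(4K)$; Lemma~\ref{L.approx.com} then supplies $M=M_\varepsilon$ and $\la_0=\la_{M_\varepsilon}$, which determines $c_M$ and hence $C=2c_M K$. Both inequalities in \eqref{8.defaprinverse} then hold uniformly for all approximate maps $F$ with $|\la|\le\la_0$. I expect the main subtlety to be the identity $\gamma_F\pi_F=\mathrm{id}$: should the precise definitions instead leave a small residual defect supported in the central annulus $\Omega_M$ (for example, because the two cutoffs in the definitions of $\pi_F$ and $\phi_M$ do not perfectly match), it would have to be estimated via the gradient bound \eqref{8.logphi} and folded into the $\varepsilon$-budget by enlarging $M$, but the overall structure of the argument would be unchanged.
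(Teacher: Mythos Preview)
Your proof is essentially the same as the paper's: both write $\D_F A_F-\mathrm{id}=(\D_F\Gamma_F-\gamma_F\D_f)\circ P_f\circ\pi_F$ (using $\D_fP_f=\mathrm{id}$ and $\gamma_F\pi_F=\mathrm{id}$), apply Lemma~\ref{L.approx.com} with $\varepsilon=1/(4K)$ to make this term at most $\tfrac12$, and then bound $\|A_F\|$ by composing the operator bounds for $\Gamma_{F,M}$, $P_f$, and $\pi_F$. Your injectivity argument for $\gamma_F\pi_F=\mathrm{id}$ and your caveat about a possible residual in $\Omega_M$ are extra care that the paper does not spell out, but the structure and choice of constants match exactly.
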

\begin{proof}  Write $D_F A_F-I$ as
 \best
 D_F\circ \Gamma_F\circ  P_f\circ  \pi_F - id &=&  (D_F \Gamma_F- \gamma_F D_f) \circ  P_f\circ  \pi_F +  \gamma_F D_f \circ  P_f\circ  \pi_F -id
 \\  
&=&
  (D_F \Gamma_F- \gamma_F D_f) \circ  P_f\circ  \pi_F.
\eest
We know that $\! \pi_F\!\le 2$ and $\! P_f\!\le K$ are uniformly bounded on the compact $\cal K$.  Hence there is a bound on the operator norm:
\best
\| D_F A_F-I\| \ \le\  \| D_F \Gamma_F- \gamma_F D_f\| \cdot \| P_f \| \cdot \|\pi_F\| \ \le\  2K  \| D_F \Gamma_F- \gamma_F D_f\|.
\eest
Now take $\ep= \frac 1{4K}>0$ in Lemma~\ref{L.approx.com} to obtain the first inequality of \eqref{8.defroughinverse}.  This choice of $\ep$   fixes 
the slope $M=M_\ep$ in Lemma~\ref{L.approx.com}.  With this choice,   $\Gamma_{F, M}$ are bounded, with a  bound that  depends on $M$, and hence
\best
\| A_F\| \ =\   \| \Gamma_{F, M}\circ  P_f\circ  \pi_F\| \ \le\   \| \Gamma_{F, M}\| \cdot \| P_f\| \cdot\| \pi_F\| \ \le\   2K  \| \Gamma_{F, M}\|.
\eest
\end{proof}

\vspace{5mm}

\section{{\bf \large Typographical errors}} 

The following typographical errors in
\cite{ip} have no consequences, but may cause confusion.
{\small
\begin{itemize}
\item Line after (5.1): insert ``after passing to a subsequence''.
\item In (5.18), delete $+1/3$.
\item In (1.11), the second $+$ should be a $-$.
\item Page 946, line 4:  $J(\nabla_{\nu(w)}J)\xi \to (\nabla_{J\nu(w)}J)\xi$. One can also note that the tensor $\widehat{\nabla}J$ is zero by  (C.7.5) of \cite{ms2}  for compatible structures $(\w, J, g)$.
\item  Page 1003, line 8:  (10.6) $\mapsto$ (10.11).
\end{itemize}
% A  complete list will be posted on the the authors' webpages.

\vspace{1cm}

%%%%%%%%%%%%%%%%%%%%%%%%%%%%%%%%%%%%%%%%%%%%%%%%%%%%%%%%%%%%%%%%%%%%%%%%%%%%%%%
{\small

\medskip

}

 \end{document}